\documentclass[12pt]{article}

\usepackage{amsmath}
\usepackage{amsfonts}
\usepackage{amssymb}
\usepackage{amsthm}

\usepackage{natbib}
\usepackage{url}

\newcommand{\E}{\mathbb{E}}
\newcommand{\Var}{\mathrm{Var}}
\newcommand{\Cov}{\mathrm{Cov}}

\newtheorem{proposition}{Proposition}
\newtheorem{corollary}{Corollary}
\newtheorem{remark}{Remark}
\newtheorem{example}{Example}
\newtheorem{definition}{Definition}
\newtheorem{lemma}{Lemma}
\newtheorem{theorem}{Theorem}

\begin{document}

\title{Covariance as a commutator }
\author{
  Carlos García Meixide\thanks{Work done while at the University of California, Berkeley.} \\
  Instituto de Ciencias Matemáticas, CSIC \\
  Departamento de Matemáticas, Universidad Autónoma de Madrid \\
  \texttt{carlos.garcia@icmat.es}
}
\maketitle

\begin{abstract}
The covariance between real finite variance random variables can be expressed as the commutator of taking expectations and multiplying, both viewed as operators extended to act jointly on pairs of functions. The efficient influence curve of the mean represents a centering operator which we demonstrate to interact with expectations and products through simple commutator identities. These expressions reveal an underlying Lie algebraic structure that endows the calculus of efficient influence curves with a natural differential geometric interpretation. \\
\textit{Keywords: }Efficient influence curve; semiparametric efficiency; Jacobi identity.
\end{abstract}

\section{Introduction}

Let $X,Y\in L^2(P)$ for a distribution $P$ on a measurable space. The covariance $\Cov(X,Y)= \E[X\cdot Y]- \E[X] \cdot  \E[Y]$ measures the failure of multiplicativity of expectation. This observation admits a precise operator form: the covariance is the commutator of the expectation functional with the pointwise product on $L^2(P)$, loosely speaking 
$$\Cov= (\mathbb E \circ \cdot) - (\cdot \circ \mathbb E) $$

We formalize this in a Lie algebra of statistical operators on  $L^2(P)$ and embed it into semiparametric theory, where the efficient influence curve (EIC) of the mean corresponds to the centering operator $T:f\mapsto f- \E[f]$.

The EIC, also known as the canonical gradient, is a central object in contemporary statistics. Originating in the robust statistics literature \citep{huber1964robust,hampel1974influence}, it provides the most sensitive linear approximation to a statistical functional under the unrestricted model, and plays a key role in efficiency theory \citep{bickel1993efficient}.  
In modern causal inference, EICs are essential pieces for targeted learning \citep{vanderlaan2006targeted} and debiased machine learning \citep{chernozhukov2018double}, enabling the construction of estimators that achieve the semiparametric efficiency bound even when nuisance functions are estimated with flexible machine learning, data adaptive methods.

Finding EICs is often nontrivial \citep{kennedy2023, Hines_2022}.  
Nevertheless, once the EIC of an estimand is known, it directly yields efficient estimators satisfying the nonparametric efficiency bound.  
This bound generalises the Cramér–Rao lower bound to infinite-dimensional models and quantifies the minimal possible asymptotic variance among all regular estimators. To illustrate, consider the mean $\psi(P) = {E}_P[Y]$ of an i.i.d. sample $Y_1, \dots, Y_n$.  
Its EIC is simply $\varphi(Y;P) = Y - \psi(P)$, with variance $\Var_P(\varphi)/n=\Var_P(Y)/n$, yielding the sample mean $\bar{Y}_n$ as the efficient estimator.  
The classical central limit theorem gives
\[
\sqrt{n}\{\bar{Y}_n - \psi(P)\} \ \overset{d}{\to} \ \mathcal{N}\!\left(0, \, \Var_P(Y)\right),
\]
and the nonparametric Cramér–Rao bound asserts that no regular estimator can have asymptotic variance smaller than $\Var_P(Y)$.  
For more complex functionals—such as average treatment effects, conditional effect measures, or mediation parameters—the EIC encapsulates, in a single expression, both the form of an optimal estimator and the efficiency bound it attains.

The EIC behaves mathematically like a derivative of the parameter map along regular submodels. As such, it inherits the standard properties of a gradient: additivity (linearity in sums of parameters), the Leibniz rule (product rule for parameters that factor), and the chain rule (composition of parameters through smooth functions). These identities, familiar from calculus, become powerful algebraic tools in semiparametric theory: once the EICs of simpler building blocks are known, the EIC of more complex functionals can be assembled without recomputing from first principles \citep{schuler2025introduction}. This “gradient algebra” viewpoint clarifies why EICs are so popular in modern causal inference, where estimands are often constructed compositionally from elementary components. In our framework, these differential identities acquire a deeper algebraic interpretation: we show that the EIC calculus is governed by commutator relations between expectation and multiplication operators on $L^2(P)$. This connects the differential structure of semiparametric theory to an operator algebraic setting. We study the commutators that are needed to establish basic identities and prove a Jacobi identity involving these brackets.

\section{An extended statistical algebra}
We briefly recall notions from \citet{bickel1993efficient}. Let $\mathcal M$ be a dominated model on a measurable space $(\mathcal Z,\mathcal A)$ and $P\in\mathcal M$. A regular parametric submodel $\{P_\varepsilon:\varepsilon\in(-\eta,\eta)\}\subset\mathcal M$ through $P$ has score $s=\left.\partial_\varepsilon \log p_\varepsilon\right|_{\varepsilon=0}\in L_0^2(P)$, the zero mean subspace of $L^2(P)$. The \emph{tangent space} $T(P)$ is the closure in $L_0^2(P)$ of all such scores. A functional $\psi:\mathcal M\to\mathbb R$ is \emph{pathwise differentiable} at $P$ if there exists $\phi_\psi\in L_0^2(P)$ such that, for every regular submodel with score $s\in T(P)$,
\[
\left.\frac{d}{d\varepsilon}\psi(P_\varepsilon)\right|_{\varepsilon=0}= \E[\phi_\psi\,s].
\]
The projection of $\phi_\psi$ onto $T(P)$ is the \emph{canonical gradient} or \emph{efficient influence curve}. For the mean functional $\psi(P)= \E[U]$, the EIC is $\phi_\psi=U- \E[U]$ in the nonparametric model, which induces the centering operator 
\[
\begin{aligned}
	T : &L^2(P) \longrightarrow L^2(P), \\
&	f \longmapsto f -  \E[f].
\end{aligned}
\]
 We focus on the case where the parameter has the form $\psi(P) = {E}_P[U] = PU$ for some $U \in L^2(P)$, i.e., the population mean. Let $H=L^2(P)$ be the Hilbert space of square-integrable random variables on $(\mathcal{Z},\mathcal{A},P)$ with inner product 
$\langle f,g \rangle = \mathbb{E}_P[fg]$. Denote by $\otimes:H\times H\to H$ the pointwise product $(f,g)\mapsto fg$. Let $P:H\to\mathbb R$ be the expectation operator $Pf= \E[f]$, and let $\eta:\mathbb R\to H$ embed scalars via $\eta(a)=a\cdot 1$. Define the centering operator $T=\mathrm{Id}_H-\eta\circ P$, so $Tf=f-Pf$. 

\begin{proposition} The following decomposition holds
	\label{prop:decomposition}
	\[
	H = \eta(\mathbb{R}) \oplus \ker P,
	\]
	where $\eta(\mathbb{R}) = \{c \cdot 1: c \in \mathbb{R}\}$ is the one-dimensional space of constant functions, and $\ker P = L^2_0(P) = \{f \in L^2(P): {E}_P[f] = 0\}$ is the mean-zero subspace.
\end{proposition}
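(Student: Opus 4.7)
The plan is to exhibit the decomposition constructively via the centering operator $T = \mathrm{Id}_H - \eta \circ P$ already defined in the text, and then to verify that the two candidate subspaces intersect only in zero. Because the inner product on $H$ is $\langle f, g\rangle = \E[fg]$, I will also remark briefly that the sum is actually orthogonal, so the symbol $\oplus$ can be read in either the algebraic or Hilbertian sense.

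For existence, given any $f \in H$, I would write
\[
f = \eta(Pf) + Tf,
\]
where $\eta(Pf) = (\E[f])\cdot 1 \in \eta(\mathbb{R})$ and $Tf = f - \eta(Pf)$. The only thing to check is $Tf \in \ker P$, which follows from linearity of expectation together with $P(\eta(a)) = a\cdot P(1) = a$: indeed $P(Tf) = Pf - P(\eta(Pf)) = Pf - Pf = 0$. Square-integrability of $Tf$ is immediate since $\eta(Pf)$ is a constant and both terms lie in $L^2(P)$.

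For the trivial intersection, suppose $h \in \eta(\mathbb{R}) \cap \ker P$; then $h = c \cdot 1$ for some $c \in \mathbb{R}$, and applying $P$ yields $0 = P(c \cdot 1) = c$, hence $h = 0$. Combined with existence, this gives the direct sum decomposition. Orthogonality, if desired, is an immediate bonus: for $c \cdot 1 \in \eta(\mathbb{R})$ and $g \in \ker P$, one has $\langle c\cdot 1, g\rangle = c\,\E[g] = 0$.

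There is no real obstacle here; the argument is essentially the observation that $\eta \circ P$ and $T$ are complementary projections on $H$, with ranges $\eta(\mathbb{R})$ and $\ker P$ respectively, as is already implicit in the definition $T = \mathrm{Id}_H - \eta \circ P$. The only tacit convention to keep in mind is $P(1) = 1$, i.e., the normalization of the probability measure; without this, the identity $P \circ \eta = \mathrm{Id}_{\mathbb{R}}$ used above would fail and the splitting would require rescaling.
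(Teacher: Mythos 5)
Your proof is correct and follows essentially the same route as the paper's: both decompose $f$ as $\eta(Pf) + (f - \eta(Pf))$, check that the second piece is mean-zero, and observe the orthogonality $\langle c\cdot 1, g\rangle = c\,\E[g] = 0$. The only cosmetic difference is that the paper dresses up the projection property via the Riesz representation of $P$ as $\langle \cdot, 1\rangle$, while you establish directness via the trivial intersection; both are complete.
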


\begin{proof}
	The Riesz representation theorem ensures that there exists a unique $h_0 \in H$ such that $Pf = \langle f, h_0 \rangle$ for all $f \in H$.
	Since $\langle f, 1 \rangle = \mathbb{E}[f]$, we have $h_0 \equiv 1$, and thus $P$ is the orthogonal projection onto $\mathrm{span}\{1\} = \eta(\mathbb{R})$. The kernel of $P$ is $\ker P = \{f \in H: \mathbb{E}[f] = 0\} = L^2_0(P)$,
	and for any $c \in \mathbb{R}$ and $g \in L^2_0(P)$, we have $\langle c\cdot 1, g \rangle = c\,\mathbb{E}[g] = 0$.
	Hence $\eta(\mathbb{R}) \perp L^2_0(P)$. Finally, for any $f \in H$, set $c = \mathbb{E}[f]$ and $g = f - c \cdot 1$. Then $g \in L^2_0(P)$ and $f = c \cdot 1 + g$.
	Thus $H = \eta(\mathbb{R}) \oplus \ker P$.
\end{proof}

\begin{remark}[Parameters as quotient classes]
	Every $X\in H$ decomposes uniquely as
	\[
	X = \eta(PX) + X - \eta(PX), \qquad X - \eta(PX)\in \ker P.
	\]
	We say two random variables represent the same parameter value if they differ by an element of $\ker P$:
	\[
	X \sim Y \quad \iff \quad X-Y \in \ker P.
	\]
	The space of parameters can thus be identified with the quotient $H/\ker P$, which is isomorphic to $\eta(\mathbb{R})$. In this quotient, $X$ and $\eta(PX)$ are literally the same element. We define $T$ on equivalence classes by
	\[
	T([X]) := X - PX,
	\]
	which is well-defined because $X$ and $\eta(PX)$ belong to the same class $[X]$ and yield the same $X-PX \in \ker P$.  
	In particular,
	\[
	T(X) = T(PX)
	\]
	is not an abuse of notation: it simply reflects that both $X$ and $PX$ are two representatives of the same class in $H/\ker P$.
\end{remark}

\begin{example}[Variance]
	Let $X \in L^2(P)$ with mean $\mu_X = PX$. The variance can be written as
	\[
	\sigma_X^2 = P(X^2) - \mu_X^2.
	\]
	We have
	\[
	T\big(P(X^2)\big) = X^2 - P(X^2), 
	\quad
	T(\mu_X^2) = 2\mu_X\,T(\mu_X) = 2\mu_X\,(X - \mu_X).
	\]
	Therefore,
	\[
	T(\sigma_X^2) 
	= \big(X^2 - P(X^2)\big) - 2\mu_X\,(X - \mu_X)
	= (X - \mu_X)^2 - P\big((X - \mu_X)^2\big).
	\]
	The quotient space perspective guarantees that whether we write $T\left((X - \mu_X)^2\right)$ or $TP\left((X - \mu_X)^2\right)$ , we mean the same EIC.
\end{example}

In the following, we use two forms of slotwise composition. If $M:H\times H\to V$, with $V$ being $H$ or $\mathbb{R}$, is bilinear and $A_1,A_2:H\to H$ are linear, set
\[
M\circ(A_1,A_2):(x,y)\mapsto M(A_1x,A_2y).
\]
If $\ell:H\to\mathbb R$ is linear, define
\[
(\ell\circ M)(x,y)=\ell(M(x,y)),\qquad M\circ(\ell,\ell)(x,y)=M(\eta\,\ell x,\eta\,\ell y).
\]
We overload $\otimes$ for scalar multiplication on $\mathbb R$, writing $\otimes_{\mathbb R}$ when needed.

\begin{definition}[Commutators]
\label{def:comm}
We define the following brackets:
\begin{align*}
[P,\otimes]\ &:=\ (P\circ\otimes)\ -\ \otimes_{\mathbb R}\circ(P,P)\quad:\ H\times H\to\mathbb R,\\
[\otimes,T]\ &:=\ \otimes\circ(T,T)\ -\ T\circ\otimes\quad:\ H\times H\to H,\\
[T,P]\ &:=\ (x,y)\mapsto \big(T(Px),\,T(Py)\big)\quad:\ H\times H\to H\times H.
\end{align*}
\end{definition}

In the last definition we omit the term $P \circ T$ since EICs are mean-zero, making $PT = 0$ on their domain. These are well-typed maps: the first measures the defect of multiplicativity of $P$; the second compares ``multiply then center'' to ``center then multiply''; the third applies centering to the coordinatewise expectations.

\begin{proposition}[Covariance as a commutator]
\label{prop:cov}
For all $X,Y\in L^2(P)$,
\[
[P,\otimes](X,Y)=P(XY)-(PX)(PY)=\Cov_P(X,Y).
\]
\end{proposition}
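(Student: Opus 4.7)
The plan is to unpack the definition of $[P,\otimes]$ from Definition~\ref{def:comm} and recognise the resulting expression as the classical covariance formula. First I would check that every term is well-defined: since $X,Y\in L^2(P)$, the Cauchy--Schwarz inequality gives $XY\in L^1(P)$, so $P(XY)$ makes sense, and $PX$, $PY$ are finite real numbers.

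Next I would evaluate the two summands of the bracket in turn. Under the slotwise composition conventions stated just before Definition~\ref{def:comm}, the term $(P\circ\otimes)(X,Y)$ is the expectation of the pointwise product $XY$, hence equals $P(XY)$; while $\otimes_{\mathbb R}\circ(P,P)(X,Y)$ is the ordinary scalar product of the two expectations, giving $(PX)(PY)$. Subtracting produces $P(XY)-(PX)(PY)$, which is precisely the textbook formula for $\Cov_P(X,Y)$.

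There is no genuine obstacle here: the proposition is essentially definitional, a repackaging of the bilinear identity $\Cov(X,Y)=\E[XY]-\E[X]\E[Y]$ in the operator notation introduced above. What the statement contributes is conceptual rather than computational: it identifies covariance as the precise failure of the expectation functional to intertwine with pointwise multiplication, which motivates the Lie algebraic perspective developed in the rest of the paper.
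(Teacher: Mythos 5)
Your proof is correct and follows the same route as the paper's: unpack Definition~\ref{def:comm}, identify $(P\circ\otimes)(X,Y)=P(XY)$ and $\otimes_{\mathbb R}(PX,PY)=(PX)(PY)$, and recognise the difference as the covariance. The added Cauchy--Schwarz check that $XY\in L^1(P)$ is a harmless refinement the paper leaves implicit.
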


\begin{proof}
By Definition~\ref{def:comm}, $(P\circ\otimes)(X,Y)=P(XY)$ and $\otimes_{\mathbb R}(PX,PY)=(PX)(PY)$.
\end{proof}

\vspace{0.5cm}

\begin{proposition}[Product--centering commutator]
\label{prop:prod-cent}
For all $X,Y\in L^2(P)$,
\[
[\otimes,T](X,Y)=(TX)(TY)-T(XY)=(X-\mu_X)(Y-\mu_Y)-\big(XY-P(XY)\big).
\]
\end{proposition}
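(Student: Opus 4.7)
The plan is to unfold Definition~\ref{def:comm} and the explicit form of $T=\mathrm{Id}_H-\eta\circ P$; the statement is essentially a definitional identity, so no estimates or algebraic tricks are required. First, I would apply the bracket definition: since $[\otimes,T]=\otimes\circ(T,T)-T\circ\otimes$, evaluating at $(X,Y)$ under the slotwise composition convention immediately gives
\[
[\otimes,T](X,Y)=\otimes(TX,TY)-T(\otimes(X,Y))=(TX)(TY)-T(XY),
\]
which is the first asserted equality. For the second, I would substitute $TX=X-\mu_X$, $TY=Y-\mu_Y$, and $T(XY)=XY-P(XY)$ into the preceding line.

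Since the proof is purely definitional, there is no real obstacle; the content of the proposition lies in recording the identity for later use rather than in any calculation. What I would flag after the formal computation is the internal consistency with Proposition~\ref{prop:cov}: expanding $(X-\mu_X)(Y-\mu_Y)=XY-\mu_X Y-\mu_Y X+\mu_X\mu_Y$ and taking $P$ collapses to $P(XY)-\mu_X\mu_Y=\Cov_P(X,Y)$. Because $T(XY)\in\ker P$, applying $P$ to $[\otimes,T](X,Y)$ recovers $[P,\otimes](X,Y)$. This shows that the product--centering commutator is a lift of the covariance commutator from $\mathbb{R}$ to $H$, retaining the mean-zero fluctuation of $XY$ that the covariance discards after taking expectations—precisely the object one expects to see in the Jacobi identity announced in the introduction.
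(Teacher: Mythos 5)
Your proof is correct and follows the same route as the paper, which simply declares the identity immediate from the definitions of $[\otimes,T]$ and $T$; you have merely spelled out the unfolding explicitly. The closing observation that applying $P$ recovers $[P,\otimes]$ is a nice (correct) consistency check but is not needed for the proof itself.
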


\begin{proof}
Immediate from the definition of $T$ and bilinearity of $\otimes$. Recall the identity $T(U)=T(PU)$ ensuring the  calculations are unambiguous whether starting from the variable or from its mean. 
\end{proof}
\vspace{0.5cm}

\begin{proposition}[Centering--expectation commutator]
\label{prop:cent-exp}
For all $X,Y\in L^2(P)$,
\[
[T,P](X,Y)=\big(T(PX),\,T(PY)\big)=(X-\mu_X,\,Y-\mu_Y).
\]
\end{proposition}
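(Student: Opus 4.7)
The plan is to unfold the definition of $[T,P]$ given in Definition~\ref{def:comm} and then invoke the quotient-class convention for $T$ established in the Remark on parameters as quotient classes. By definition, $[T,P](X,Y) = (T(PX),\,T(PY))$, so the entire proof reduces to evaluating each coordinate $T(PU)$ for $U \in \{X,Y\}$ and verifying that it equals $U - \mu_U$.

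Reading $T$ naively as the operator $\mathrm{Id}_H - \eta\circ P$ applied to the constant function $\eta(PU)$ would give $\eta(PU) - \eta(P\,\eta(PU)) = 0$, which is \emph{not} the asserted value. The remark resolves this tension: since $U - \eta(PU) \in \ker P$, the elements $U$ and $\eta(PU)$ represent the same class $[U] \in H/\ker P$, and $T$ is defined on that class by $T([U]) := U - PU$. Under this convention one has $T(PU) = T(U) = U - \mu_U$, which is the identity already used implicitly in Proposition~\ref{prop:prod-cent} and in the Variance example. Applying this to each coordinate yields $[T,P](X,Y) = (X - \mu_X,\, Y - \mu_Y)$, as claimed.

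The only real obstacle is notational rather than mathematical: one must consistently parse any symbol of the form $T(PU)$ through the quotient identification $T(PU) = T(U)$, not through the literal action of the centering operator on the constant function $\eta(PU)$. Once this is made explicit, the statement collapses to the definition of $T$ on $H/\ker P$, and no further computation is required.
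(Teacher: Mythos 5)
Your proof is correct and takes essentially the same route as the paper: both simply unfold Definition~\ref{def:comm} and identify each coordinate $T(PU)$ with $TU = U - \mu_U$. You are more explicit than the paper's one-line proof about why the quotient-class convention $T(PU)=T(U)$ is needed (a literal reading would give $0$), which is a worthwhile clarification but not a different argument.
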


\begin{proof}
By definition $[T,P](X,Y)$ records the centered coordinates $(TX,TY)$, equal to $(X-\mu_X,Y-\mu_Y)$.
\end{proof}
\vspace{0.5cm}

The next two results look almost like commutator relations

\begin{corollary}
$T(PX)T(PY) + T(PXPY) = T(PXY) + \operatorname{Cov}_P(X,Y)$
\end{corollary}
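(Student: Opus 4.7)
The plan is to expand both sides using the definitions from Definition~\ref{def:comm}, Proposition~\ref{prop:cov}, and the quotient-class identification in the Remark, and to verify that each reduces to the same element of $H$, namely $XY-\mu_X\mu_Y$.

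First I would rewrite $T(PX)$ and $T(PY)$. By the quotient identification $T([X])=X-PX$ applied to the representative $PX$ of the class $[X]$, we have $T(PX)=X-\mu_X$ and $T(PY)=Y-\mu_Y$, so the first summand on the left becomes $(X-\mu_X)(Y-\mu_Y)$. Next, I would treat $T(PXPY)=T(\mu_X\mu_Y)$ as in the variance example: the parameter $\mu_X\mu_Y$ is a product of mean functionals, so the Leibniz rule for $T$ gives
\[
T(\mu_X\mu_Y)=\mu_Y\,T(\mu_X)+\mu_X\,T(\mu_Y)=\mu_Y(X-\mu_X)+\mu_X(Y-\mu_Y).
\]

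Adding the two contributions on the left and collecting terms, I expect the cross terms $X\mu_Y$, $Y\mu_X$ and the constants in $\mu_X\mu_Y$ to cancel pairwise, leaving $XY-\mu_X\mu_Y$. For the right-hand side, the definition of $T$ as a centering operator gives $T(PXY)=XY-P(XY)$ directly, while Proposition~\ref{prop:cov} yields $\Cov_P(X,Y)=P(XY)-\mu_X\mu_Y$. Summing these, $P(XY)$ cancels and I obtain $XY-\mu_X\mu_Y$, matching the left-hand side.

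The only genuinely delicate step is justifying the dual role of $T$: as the linear centering map on $L^2(P)$ when applied to $PXY$, and as a derivation satisfying a Leibniz rule when applied to a product of scalar means. Both interpretations are legitimized by the quotient construction in the Remark, which identifies $X$ with $\eta(PX)$ in $H/\ker P$ and lets the EIC calculus act coherently on parameter expressions; I would flag this as the key interpretive move and otherwise present the computation as a short algebraic check.
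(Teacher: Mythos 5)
Your proof is correct and follows essentially the same route as the paper's: expand $T(PX)T(PY)=(X-\mu_X)(Y-\mu_Y)$, apply the Leibniz rule to $T(PXPY)$, and observe that the sum collapses to $XY-\mu_X\mu_Y = T(PXY)+\Cov_P(X,Y)$. Your explicit flagging of the dual role of $T$ (centering operator versus derivation on parameter expressions, justified by the quotient construction) is a welcome clarification of a point the paper leaves implicit.
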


\begin{proof}
$T(PX)T(PY) = (X - PX)(Y-PY) = XY - XPY - YPX + PXPY$. By Leibnitz's rule,
$T(PXPY) = (X- PX)PY + PX(Y-PY)$. Adding both terms, everything cancels out but $XY - PXPY = XY - P(XY) + P(XY) - PXPY = T(PXY) + \operatorname{Cov}_P(X,Y). $
\end{proof}

\vspace{0.5cm}
\begin{corollary}\label{cor:cov}
$T(PX)T(PY) = T(\operatorname{Cov}_P(X,Y)) + \operatorname{Cov}_P(X,Y)$
\end{corollary}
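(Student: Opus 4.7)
My plan is to derive this corollary directly from the preceding one by a one-line rearrangement combined with the linearity of the centering operator $T$.

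I would start by isolating $T(PX)T(PY)$ in the identity of the previous corollary, obtaining
\[
T(PX)T(PY) \;=\; T(PXY) \;-\; T(PXPY) \;+\; \operatorname{Cov}_P(X,Y).
\]
Since $T=\mathrm{Id}_H-\eta\circ P$ is a linear map on $H$, it respects differences of parameters, so
\[
T(PXY) - T(PXPY) \;=\; T\!\left(PXY - PX\cdot PY\right) \;=\; T(\operatorname{Cov}_P(X,Y)),
\]
the last equality being the definition of covariance. Substituting into the previous display yields the stated identity.

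The only subtlety worth flagging is the dual role of $T$ in this calculus: on products of parameters such as $PX\cdot PY$ it obeys a Leibniz rule (as already invoked in the proof of the previous corollary to compute $T(PXPY)$), while on \emph{sums} of parameters it is simply linear. The step above relies only on the latter, so no reinterpretation of $T$ is needed and the Leibniz contribution is already absorbed into the term $T(PXPY)$ inherited from the previous corollary. Consequently I anticipate no real obstacle; the argument is essentially a bookkeeping consequence of Corollary 1 together with the additivity of the EIC operator.
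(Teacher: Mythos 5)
Your proof is correct and is essentially the paper's own argument run in the opposite direction: the paper expands $T(\operatorname{Cov}_P(X,Y))=T(PXY)-T(PXPY)$ by ``linearity of differentiation'' and combines with the preceding corollary, while you rearrange the preceding corollary and recombine the two $T$-terms. One small caution: the justification is the additivity of the EIC operator on parameters (as you correctly note at the end), not literal linearity of the centering map $\mathrm{Id}_H-\eta\circ P$ on $H$, under which the constants $PXY$, $PXPY$, and $\operatorname{Cov}_P(X,Y)$ would all be sent to zero.
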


\begin{proof}
Immediate using linearity of differentiation: $T(\operatorname{Cov}_P(X,Y))= T((PXY) - PXPY) = T(PXY) - T(PXPY)$. 
\end{proof}

\section{A Jacobi identity}

We now establish a Jacobi identity for the triple $(P,\otimes,T)$.

\begin{lemma}
\label{lem:pieces}
For $X,Y\in L^2(P)$,
\begin{align*}
[T,[P,\otimes]](X,Y)\ &=\ T\big(\Cov_P(X,Y)\big)-\Cov_P(TX,TY)
= T\big(\Cov_P(X,Y)\big)-\Cov_P(X,Y),\\
[P,[\otimes,T]](X,Y)\ &=\ \Cov_P(X,Y)- (TX)(TY)+T(PX\,PY),\\
[\otimes,[T,P]](X,Y)\ &=\ (TX)(TY)-T(PXY).
\end{align*}
\end{lemma}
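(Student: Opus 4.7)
The plan is to evaluate each of the three outer commutators by adapting the pattern of Definition~\ref{def:comm}: apply the outer operator to the inner bracket's output, and subtract the inner bracket evaluated after the outer operator in each slot. Since the inner brackets were already computed explicitly in Propositions~\ref{prop:cov}--\ref{prop:cent-exp}, each identity should reduce to elementary simplification together with two recurring facts: the Leibniz rule for $T$ (as exhibited in Example~1), and the identification $T(PX)=TX$ from the Remark on quotient classes.

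For the first identity, I would substitute $[P,\otimes](X,Y)=\Cov_P(X,Y)$ from Proposition~\ref{prop:cov}. The ``$T$-first'' piece is $T(\Cov_P(X,Y))$, interpreted via the Remark as the EIC of the scalar parameter $\Cov_P(X,Y)$. The ``inner-first'' piece is $[P,\otimes](TX,TY)=\Cov_P(TX,TY)$, which equals $\Cov_P(X,Y)$ because $TX-X$ and $TY-Y$ are constants and covariance is translation invariant. This establishes both equalities at once.

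For the second identity, I would substitute $[\otimes,T](X,Y)=(TX)(TY)-T(XY)$ from Proposition~\ref{prop:prod-cent}. Applying $P$ gives $P\bigl((TX)(TY)\bigr)-P(T(XY))=\Cov_P(X,Y)-0$, using that $T$ outputs mean-zero elements. The inner-first piece is $[\otimes,T](PX,PY)=(T(PX))(T(PY))-T\bigl((PX)(PY)\bigr)$; invoking $T(PX)=TX$ from the Remark, this collapses to $(TX)(TY)-T(PX\cdot PY)$. Subtracting yields the stated expression. For the third identity, substitute $[T,P](X,Y)=(TX,TY)$ from Proposition~\ref{prop:cent-exp}; reducing the pair via $\otimes$ gives $(TX)(TY)$, while composing in the opposite order yields $T(P(XY))=T(PXY)$, giving the claim after subtraction.

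The main obstacle is notational rather than computational: the three inner brackets have different codomains ($\mathbb{R}$, $H$, and $H\times H$), so the outer ``commutator'' pattern must be adapted case by case, and the argument must justify each adaptation as the natural one dictated by composability of types. The conceptual linchpin is the Remark's identification $T(PX)=TX$, which bridges the two hats worn by $T$ (centering of random variables versus EIC of scalar parameters) and is exactly what makes the algebra close on both sides of each bracket.
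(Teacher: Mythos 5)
Your proposal is correct: substituting the inner brackets from Propositions~\ref{prop:cov}--\ref{prop:cent-exp} and simplifying via the identification $T(PX)=TX$, the translation invariance of covariance, and the fact that $T$ outputs mean-zero elements yields exactly the three stated identities (and their sum telescopes to zero, consistent with Theorem~\ref{thm:jacobi}). The paper states Lemma~\ref{lem:pieces} without proof, so there is no alternative argument to compare against; your direct computation, including the case-by-case typing of the outer commutators, is the natural one and supplies the omitted details.
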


\begin{theorem}[Jacobi identity]
\label{thm:jacobi}
For all $X,Y\in L^2(P)$,
\[
\boxed{[T,[P,\otimes]]+[P,[\otimes,T]]+[\otimes,[T,P]]=0}
\]
\end{theorem}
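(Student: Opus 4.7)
The plan is to use Lemma~\ref{lem:pieces} as a black box: since it already provides closed-form expressions for each of the three double brackets, the Jacobi identity reduces to an algebraic verification that the sum of those expressions vanishes identically on $H\times H$. So I would not attempt to re-derive the brackets from Definition~\ref{def:comm}; instead I would just add the three right-hand sides and track the cancellations.

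Writing the three expressions on top of one another, I would first observe the two obvious cancellations: the $\pm \Cov_P(X,Y)$ pair from the first and second brackets cancel, and the $\mp (TX)(TY)$ pair from the second and third brackets cancel. What remains is
\[
T\big(\Cov_P(X,Y)\big) + T(PX\,PY) - T(PXY).
\]
The hinge of the argument is then linearity of $T$: pulling the remainder into a single application of $T$ gives $T\big(\Cov_P(X,Y) + PX\,PY - PXY\big)$, and the bracketed scalar is $P(XY) - PX\,PY + PX\,PY - P(XY) = 0$ by Proposition~\ref{prop:cov}. Since $T(0)=0$, we conclude.

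I expect the only real obstacle to be bookkeeping, namely making sure the $T(PX\,PY)$ and $T(PXY)$ terms in the second and third brackets are paired with the correct sign and that the scalar-versus-function-valued nature of each summand is handled consistently (the first bracket lives in $H$ via $T\circ\Cov_P$, whereas the covariance terms are constants embedded through $\eta$). The quotient remark preceding the example ensures that writing $T(\Cov_P(X,Y))$ is unambiguous, so no further care is needed there. After those checks the identity follows in one line, so I would present the proof as a short display of the three brackets, a one-line cancellation, and an appeal to Proposition~\ref{prop:cov} and linearity of $T$.
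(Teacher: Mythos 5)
Your proof is correct and takes essentially the same route the paper intends: Lemma~\ref{lem:pieces} is stated precisely so that the theorem follows by summing its three right-hand sides, cancelling the $\pm\Cov_P(X,Y)$ and $\mp(TX)(TY)$ pairs, and using linearity of $T$ (exactly as in Corollary~\ref{cor:cov}) to see that $T\big(\Cov_P(X,Y)\big)+T(PX\,PY)-T(PXY)=0$. Your bookkeeping of the scalar-versus-$H$-valued terms is also consistent with the paper's quotient-class convention, so there is nothing to add.
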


\section{Discussion}
We have revealed an unexpected connection between foundational concepts in statistics—such as variance and covariance, studied since the very origins of the discipline—and modern mathematical structures used to integrate machine learning into causal inference. By expressing variance as a commutator of expectation and multiplication operators, and by situating this identity within the calculus of EICs, we connect the geometry of statistical functionals to the operator theoretic tools increasingly central to contemporary methods like targeted learning and double machine learning.

Our results complement and extend the established principles of additivity, the Leibniz rule, and the chain rule that are popularly used in the derivation of EICs \citep{schuler2025introduction}. These rules form gradient algebra—the framework that allows complex EICs to be constructed from simpler components in the same way that complex derivatives are built from elementary ones. While the literature has long exploited these calculus analogues for theoretical derivations, their role has often been treated as implicit rather than a general-purpose toolkit.

An immediate question arising from our results is whether the commutator identities we established in the case of the mean extend to more general estimands. While the mean offers a particularly simple structure, other parameters such as stochastic intervention effects or functionals defined via estimating equations may interact in more intricate ways. For such estimands, it is not obvious whether the same algebraic relations persist. Characterizing these generalized commutator formulas could reveal deeper operator-theoretic structures underlying EICs and provide a unifying language for efficiency theory beyond the mean.

The algebraic structure underlying statistical functionals has immediate practical implications for the design of causal inference strategies. This perspective aligns naturally with recent developments that automates the construction of efficient estimators without requiring manual EIC derivations \citep{luedtke2025}. A principled understanding of the operator structure ensures that such primitives can be implemented modularly and reused across problems, making semiparametric efficiency not only theoretically transparent but also computationally accessible to a much broader audience.

Geometrically understanding statistical functionals is crucial for the future of causal inference, which relies nowadays heavily on EICs and other constructions based on operator theory that are, at their core, manifestations of deeper identities and functional analysis decompositions. Revealing and exploiting this structure not only clarifies why existing methods work but also opens systematic pathways to discovering new estimators, deriving efficiency bounds, and eventually designing algorithms that are valid under weaker assumptions. As causal inference continues to expand into complex longitudinal settings, such foundational insights will be indispensable for ensuring both theoretical rigor and practical reliability.

\bibliographystyle{apalike}
%\bibliography{cova}

\end{document}